\newcommand{\eqnum}{\refstepcounter{equation}\textup{\tagform@{\theequation}}}
\newtheorem{thm}[equation]{Theorem}
\newtheorem*{thm*}{Theorem}
\newtheorem{lem}[equation]{Lemma}
\newtheorem{cor}[equation]{Corollary}
\newtheorem{prop}[equation]{Proposition}
\newtheorem*{defthm*}{Definition/Theorem}
\theoremstyle{definition}
\newtheorem*{exam*}{Example}
\newcommand{\changelocaltocdepth}[1]{%
  \addtocontents{toc}{\protect\setcounter{tocdepth}{#1}}%
  \setcounter{tocdepth}{#1}}
\newcommand{\nc}{\newcommand}
\nc{\renc}{\renewcommand}
\nc{\ssec}{\subsection}
\nc{\sssec}{\subsubsection}
\nc{\on}{\operatorname}
\nc{\term}[1]{#1\xspace}
\tikzset{
  commutative diagrams/.cd,
  arrow style=tikz,
  diagrams={>=latex}}
\tikzset{
  column sep/.code=\def\pgfmatrixcolumnsep{\pgf@matrix@xscale*(#1)},
  row sep/.code   =\def\pgfmatrixrowsep{\pgf@matrix@yscale*(#1)},
  matrix xscale/.code=%
    \pgfmathsetmacro\pgf@matrix@xscale{\pgf@matrix@xscale*(#1)},
  matrix yscale/.code=%
    \pgfmathsetmacro\pgf@matrix@yscale{\pgf@matrix@yscale*(#1)},
  matrix scale/.style={/tikz/matrix xscale={#1},/tikz/matrix yscale={#1}}}
\def\pgf@matrix@xscale{1}
\def\pgf@matrix@yscale{1}
\setlist[enumerate,1]{label={(\alph*)},itemsep=\parskip,leftmargin=0pt}
\newlist{thmlist}{enumerate}{1}
\setlist[thmlist,1]{label={\em(\roman*)},ref={\upshape{(\roman*)}},itemsep=\parskip,leftmargin=0pt}     
\newlist{remlist}{enumerate}{1}
\setlist[remlist,1]{label={(\roman*)},itemsep=\parskip,leftmargin=0pt}
\newlist{inlinelist}{enumerate*}{1}
\setlist[inlinelist,1]{label={(\alph*)}}
\nc{\sA}{\ensuremath{\mathcal{A}}\xspace}
\nc{\sB}{\ensuremath{\mathcal{B}}\xspace}
\nc{\sC}{\ensuremath{\mathcal{C}}\xspace}
\nc{\sD}{\ensuremath{\mathcal{D}}\xspace}
\nc{\sE}{\ensuremath{\mathcal{E}}\xspace}
\nc{\sF}{\ensuremath{\mathcal{F}}\xspace}
\nc{\sG}{\ensuremath{\mathcal{G}}\xspace}
\nc{\sH}{\ensuremath{\mathcal{H}}\xspace}
\nc{\sI}{\ensuremath{\mathcal{I}}\xspace}
\nc{\sJ}{\ensuremath{\mathcal{J}}\xspace}
\nc{\sK}{\ensuremath{\mathcal{K}}\xspace}
\nc{\sL}{\ensuremath{\mathcal{L}}\xspace}
\nc{\sM}{\ensuremath{\mathcal{M}}\xspace}
\nc{\sN}{\ensuremath{\mathcal{N}}\xspace}
\nc{\sO}{\ensuremath{\mathcal{O}}\xspace}
\nc{\sP}{\ensuremath{\mathcal{P}}\xspace}
\nc{\sQ}{\ensuremath{\mathcal{Q}}\xspace}
\nc{\sR}{\ensuremath{\mathcal{R}}\xspace}
\nc{\sS}{\ensuremath{\mathcal{S}}\xspace}
\nc{\sT}{\ensuremath{\mathcal{T}}\xspace}
\nc{\sU}{\ensuremath{\mathcal{U}}\xspace}
\nc{\sV}{\ensuremath{\mathcal{V}}\xspace}
\nc{\sW}{\ensuremath{\mathcal{W}}\xspace}
\nc{\sX}{\ensuremath{\mathcal{X}}\xspace}
\nc{\sY}{\ensuremath{\mathcal{Y}}\xspace}
\nc{\sZ}{\ensuremath{\mathcal{Z}}\xspace}
\nc{\bA}{\ensuremath{\mathbf{A}}\xspace}
\nc{\bB}{\ensuremath{\mathbf{B}}\xspace}
\nc{\bC}{\ensuremath{\mathbf{C}}\xspace}
\nc{\bD}{\ensuremath{\mathbf{D}}\xspace}
\nc{\bE}{\ensuremath{\mathbf{E}}\xspace}
\nc{\bF}{\ensuremath{\mathbf{F}}\xspace}
\nc{\bG}{\ensuremath{\mathbf{G}}\xspace}
\nc{\bH}{\ensuremath{\mathbf{H}}\xspace}
\nc{\bI}{\ensuremath{\mathbf{I}}\xspace}
\nc{\bJ}{\ensuremath{\mathbf{J}}\xspace}
\nc{\bK}{\ensuremath{\mathbf{K}}\xspace}
\nc{\bL}{\ensuremath{\mathbf{L}}\xspace}
\nc{\bM}{\ensuremath{\mathbf{M}}\xspace}
\nc{\bN}{\ensuremath{\mathbf{N}}\xspace}
\nc{\bO}{\ensuremath{\mathbf{O}}\xspace}
\nc{\bP}{\ensuremath{\mathbf{P}}\xspace}
\nc{\bQ}{\ensuremath{\mathbf{Q}}\xspace}
\nc{\bR}{\ensuremath{\mathbf{R}}\xspace}
\nc{\bS}{\ensuremath{\mathbf{S}}\xspace}
\nc{\bT}{\ensuremath{\mathbf{T}}\xspace}
\nc{\bU}{\ensuremath{\mathbf{U}}\xspace}
\nc{\bV}{\ensuremath{\mathbf{V}}\xspace}
\nc{\bW}{\ensuremath{\mathbf{W}}\xspace}
\nc{\bX}{\ensuremath{\mathbf{X}}\xspace}
\nc{\bY}{\ensuremath{\mathbf{Y}}\xspace}
\nc{\bZ}{\ensuremath{\mathbf{Z}}\xspace}
\nc{\bbA}{\ensuremath{\mathbb{A}}\xspace}
\nc{\bbB}{\ensuremath{\mathbb{B}}\xspace}
\nc{\bbC}{\ensuremath{\mathbb{C}}\xspace}
\nc{\bbD}{\ensuremath{\mathbb{D}}\xspace}
\nc{\bbE}{\ensuremath{\mathbb{E}}\xspace}
\nc{\bbF}{\ensuremath{\mathbb{F}}\xspace}
\nc{\bbG}{\ensuremath{\mathbb{G}}\xspace}
\nc{\bbH}{\ensuremath{\mathbb{H}}\xspace}
\nc{\bbI}{\ensuremath{\mathbb{I}}\xspace}
\nc{\bbJ}{\ensuremath{\mathbb{J}}\xspace}
\nc{\bbK}{\ensuremath{\mathbb{K}}\xspace}
\nc{\bbL}{\ensuremath{\mathbb{L}}\xspace}
\nc{\bbM}{\ensuremath{\mathbb{M}}\xspace}
\nc{\bbN}{\ensuremath{\mathbb{N}}\xspace}
\nc{\bbO}{\ensuremath{\mathbb{O}}\xspace}
\nc{\bbP}{\ensuremath{\mathbb{P}}\xspace}
\nc{\bbQ}{\ensuremath{\mathbb{Q}}\xspace}
\nc{\bbR}{\ensuremath{\mathbb{R}}\xspace}
\nc{\bbS}{\ensuremath{\mathbb{S}}\xspace}
\nc{\bbT}{\ensuremath{\mathbb{T}}\xspace}
\nc{\bbU}{\ensuremath{\mathbb{U}}\xspace}
\nc{\bbV}{\ensuremath{\mathbb{V}}\xspace}
\nc{\bbW}{\ensuremath{\mathbb{W}}\xspace}
\nc{\bbX}{\ensuremath{\mathbb{X}}\xspace}
\nc{\bbY}{\ensuremath{\mathbb{Y}}\xspace}
\nc{\bbZ}{\ensuremath{\mathbb{Z}}\xspace}
\DeclareMathSymbol{A}{\mathalpha}{operators}{`A}
\DeclareMathSymbol{B}{\mathalpha}{operators}{`B}
\DeclareMathSymbol{C}{\mathalpha}{operators}{`C}
\DeclareMathSymbol{D}{\mathalpha}{operators}{`D}
\DeclareMathSymbol{E}{\mathalpha}{operators}{`E}
\DeclareMathSymbol{F}{\mathalpha}{operators}{`F}
\DeclareMathSymbol{G}{\mathalpha}{operators}{`G}
\DeclareMathSymbol{H}{\mathalpha}{operators}{`H}
\DeclareMathSymbol{I}{\mathalpha}{operators}{`I}
\DeclareMathSymbol{J}{\mathalpha}{operators}{`J}
\DeclareMathSymbol{K}{\mathalpha}{operators}{`K}
\DeclareMathSymbol{L}{\mathalpha}{operators}{`L}
\DeclareMathSymbol{M}{\mathalpha}{operators}{`M}
\DeclareMathSymbol{N}{\mathalpha}{operators}{`N}
\DeclareMathSymbol{O}{\mathalpha}{operators}{`O}
\DeclareMathSymbol{P}{\mathalpha}{operators}{`P}
\DeclareMathSymbol{Q}{\mathalpha}{operators}{`Q}
\DeclareMathSymbol{R}{\mathalpha}{operators}{`R}
\DeclareMathSymbol{S}{\mathalpha}{operators}{`S}
\DeclareMathSymbol{T}{\mathalpha}{operators}{`T}
\DeclareMathSymbol{U}{\mathalpha}{operators}{`U}
\DeclareMathSymbol{V}{\mathalpha}{operators}{`V}
\DeclareMathSymbol{W}{\mathalpha}{operators}{`W}
\DeclareMathSymbol{X}{\mathalpha}{operators}{`X}
\DeclareMathSymbol{Y}{\mathalpha}{operators}{`Y}
\DeclareMathSymbol{Z}{\mathalpha}{operators}{`Z}
\nc{\mrm}[1]{\ensuremath{\mathrm{#1}}\xspace}
\nc{\mit}[1]{\ensuremath{\mathit{#1}}\xspace}
\nc{\mbf}[1]{\ensuremath{\mathbf{#1}}\xspace}
\nc{\mcal}[1]{\ensuremath{\mathcal{#1}}\xspace}
\nc{\msc}[1]{\ensuremath{\mathscr{#1}}\xspace}
\renc{\ge}{\geqslant}
\renc{\le}{\leqslant}
\nc{\id}{\mathrm{id}}
\DeclareMathOperator{\Hom}{\on{Hom}}
\nc{\uHom}{\underline{\smash{\Hom}}}
\DeclareMathOperator{\Maps}{\on{Maps}}
\DeclareMathOperator{\End}{\on{End}}
\nc{\uEnd}{\underline{\smash{\End}}}
\nc{\colim}{\varinjlim}
\renc{\lim}{\varprojlim}
\nc{\Cofib}{\on{Cofib}}
\nc{\Fib}{\on{Fib}}
\nc{\initial}{\varnothing}
\nc{\op}{\mathrm{op}}
\DeclareMathOperator*{\fibprod}{\times}
\renc{\setminus}{\smallsetminus}
\newcommand{\thmref}[1]{Theorem~\ref{#1}}
\newcommand{\propref}[1]{Proposition~\ref{#1}}
\newcommand{\corref}[1]{Corollary~\ref{#1}}
\renewcommand{\eqref}[1]{(\ref{#1})}
\nc{\A}{\bA}
\renc{\P}{\bP}
\nc{\Spec}{\on{Spec}}
\nc{\Qcoh}{\on{Qcoh}}
\nc{\Perf}{\on{Perf}}
\nc{\Sch}{\mrm{Sch}}
\nc{\Sm}{\mrm{Sm}}
\nc{\Bl}{\on{Bl}}
\renc{\H}{\mbf{H}}
\nc{\SH}{\mbf{SH}}
\nc{\uH}{\underline{\H}}
\nc{\uSH}{\underline{\SH}}
\nc{\Nis}{{\mrm{Nis}}}
\nc{\cdh}{{\mrm{cdh}}}
\nc{\h}{\mrm{h}}
\renc{\L}{\mrm{L}}
\nc{\T}{{\mbf{T}}}
\nc{\un}{{\mbf{1}}}
\nc{\V}{{\bV}}
\nc{\Spc}{\mrm{Spc}}
\nc{\inftyCat}{\term{$\infty$-category}}
\nc{\inftyCats}{\term{$\infty$-categories}}
\title{The cdh-local~motivic~homotopy category\vspace{-2mm}}
\author{Adeel~A.~Khan\vspace{-1mm}}
\date{2019-10-08}
\def\l@subsection{\@tocline{2}{0pt}{4pc}{6pc}{}}
\begin{document}

\begin{abstract}
  We construct a cdh-local motivic homotopy category $\SH_\cdh(S)$ over an arbitrary base scheme $S$, and show that there is a canonical equivalence $\SH_\cdh(S) \simeq \SH(S)$.
  We learned this result from D.-C.~Cisinski.
  \vspace{-5mm}
\end{abstract}

\maketitle


\parskip 0.2cm
\thispagestyle{empty}


\changelocaltocdepth{1}
\section{Sm-fibred and Sch-fibred motivic spectra}

  Let $\sF \in \SH(S)$ be a motivic spectrum over a scheme $S$.
  Recall that $\sF$ defines a cohomology theory on smooth $S$-schemes by the formula
    \begin{equation*}
      \Gamma(X, \sF) = \Maps_{\SH(S)}(\Sigma^\infty_\T(X_+), \sF).
    \end{equation*}
  In terms of the six operations, we can write $\Sigma^\infty_\T(X_+) \simeq f_\sharp f^*(\un_S)$, where $f : X \to S$ is the structural morphism and $\un_S \in \SH(S)$ is the monoidal unit, and thus by adjunction
    \begin{equation*}
      \Gamma(X, \sF) \simeq \Maps_{\SH(S)}(\un_S, f_*f^*(\sF)).
    \end{equation*}
  Note that the right-hand side makes sense even when $X$ is not smooth.
  Thus the cohomology theory $X \mapsto \Gamma(X, \sF)$ naturally extends to arbitrary $S$-schemes.
  The language of ``$\Sch$-fibred motivic spectra'' gives a more concrete way to describe this extension that doesn't use the six operations (see \propref{prop:cohomology}).

\ssec{}
  Let $S$ be a qcqs scheme\footnote{One can replace ``scheme'' by ``algebraic space'' throughout the note.}.
  A \emph{$\Sch$-fibred space} over $S$ is a presheaf of spaces on the category $\Sch_{/S}$ of $S$-schemes of finite presentation.
  We say that a $\Sch$-fibred space $\sF$ is \emph{$\A^1$-invariant} if the canonical map $\Gamma(X,\sF) \to \Gamma(X\times\A^1,\sF)$ is invertible for every $X\in\Sch_{/S}$.
  It is \emph{Nisnevich-local} if it satisfies \v{C}ech descent with respect to the topology generated by Nisnevich squares in $\Sch_{/S}$.
  By a theorem of Voevodsky \cite[Thm.~3.2.5]{AsokHoyoisWendt}, a $\Sch$-fibred space is Nisnevich-local iff it is \emph{reduced}, i.e., $\Gamma(\initial,\sF)$ is contractible, and if it sends every Nisnevich square $Q$ to a cartesian square of spaces $\Gamma(Q, \sF)$.

\ssec{}
  A \emph{$\Sm$-fibred space} over $S$ is a presheaf of spaces on the category $\Sm_{/S}$ of \emph{smooth} $S$-schemes of finite presentation.
  We say a $\Sm$-fibred space $\sF$ is \emph{$\A^1$-local} if the canonical map $\Gamma(X,\sF) \to \Gamma(X\times\A^1,\sF)$ is invertible for every $X\in\Sm_{/S}$.
  It is \emph{Nisnevich-local} if it satisfies \v{C}ech descent with respect to the topology generated by Nisnevich squares in $\Sm_{/S}$.

\ssec{}
  Let $\uH(S)$ denote the \inftyCat of $\A^1$-local Nisnevich-local $\Sch$-fibred spaces over $S$.
  This is the left Bousfield localization of the \inftyCat of $\Sch$-fibred spaces at the class of ($\A^1,\Nis$)-\emph{local equivalences}.
  By \cite[Prop.~5.5.4.15(4)]{HTT} the latter class is the strongly saturated closure of the class of $\A^1$-projections and \v{C}ech nerves of Nisnevich coverings.
  We denote the localization functor by $\L_{\A^1,\Nis}$, or sometimes simply $\L$ when there is no risk of confusion.
  Similarly for the \inftyCat $\H(S)$ of $\A^1$-local Nisnevich-local $\Sm$-fibred spaces over $S$.

\ssec{}
  Denote by $\iota : \Sm_{/S} \hookrightarrow \Sch_{/S}$ the inclusion functor.
  Restriction of presheaves along $\iota$, which we denote $\sF \mapsto \iota^*(\sF)$, admits a fully faithful left adjoint $\sF \mapsto \iota_!(\sF)$ given by left Kan extension.
  The latter commutes with colimits and satisfies $\iota_! \h_S(X) = \h_S(X)$ for every $X \in \Sm_{/S}$, where $\h_S(-)$ denotes the Yoneda embedding.

  \begin{lem}\label{lem:iota}
    Both functors $\iota^*$ and $\iota_!$ preserve Nisnevich-local and $\A^1$-local equivalences.
  \end{lem}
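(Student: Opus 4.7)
The plan has two stages. First, reduce the $\iota_!$ cases to a simple observation about $\iota^*$ on local objects. Then handle the $\iota^*$ cases directly by checking on generators of the local equivalences.

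For the first stage, I would verify that $\iota^*$ preserves both $\A^1$-local and Nisnevich-local objects: the $\A^1$-local condition on a presheaf over $\Sch_{/S}$ restricts trivially to $\Sm_{/S}$; and by Voevodsky's criterion quoted above, Nisnevich-locality amounts to reducedness plus cartesianness on Nisnevich squares, both of which restrict from $\Sch_{/S}$ to $\Sm_{/S}$ because every Nisnevich square in $\Sm_{/S}$ is in particular one in $\Sch_{/S}$ (and $\initial \in \Sm_{/S}$). Given this, the two preservation claims for $\iota_!$ follow formally from the adjunction $\iota_! \dashv \iota^*$: for $f$ a local equivalence of $\Sm$-fibred spaces and any local $\sH \in \uH(S)$, the equivalence $\Maps(\iota_! f, \sH) \simeq \Maps(f, \iota^*\sH)$ holds because $\iota^*\sH$ is local.

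For $\iota^*$ I would exploit that it admits both adjoints (the left $\iota_!$ and the right Kan extension $\iota_*$), so in particular it preserves all small colimits. Hence the class $\{g : \iota^*(g) \text{ is a local equivalence}\}$ is strongly saturated, and by the description of the local equivalences as the strongly saturated closure of $\A^1$-projections and \v Cech nerves of Nisnevich coverings, it is enough to check these two families. For an $\A^1$-projection $\h(X \times \A^1) \to \h(X)$ with $X \in \Sch_{/S}$, the identification $\iota^*\h(\A^1) = \h(\A^1)$ together with the product formula for $\Hom$ out of a smooth scheme shows that $\iota^*$ sends it to the projection $\iota^*\h(X) \times \A^1 \to \iota^*\h(X)$, which is manifestly an $\A^1$-local equivalence.

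I expect the main obstacle to be the Nisnevich case, since this is where the geometry of the embedding $\iota$ actually enters. By Voevodsky's criterion it suffices to treat, for each Nisnevich square $Q = (U', X', U, X)$ in $\Sch_{/S}$, the induced map $\iota^*(\h(U) \sqcup^{\mathrm{pre}}_{\h(U')} \h(X')) \to \iota^*\h(X)$ and to show it is a Nisnevich-local equivalence in $\Spc(\Sm_{/S})$. The plan is to test this after pulling back along each $\h(Y) \to \iota^*\h(X)$ with $Y \in \Sm_{/S}$. Two observations close the argument: first, $\iota^*\h(X)$ is the colimit of these $\h(Y)$ over $(Y \to X) \in \Sm_{/X}$ (a Yoneda identification coming from the $\iota^* \dashv \iota_*$ adjunction), and base change in the topos $\Spc(\Sm_{/S})$ commutes with colimits, so the map in question is the colimit of its base changes; second, the base-changed square $Q_Y = (U'_Y, X'_Y, U_Y, Y)$ is a Nisnevich square in $\Sm_{/S}$, since open immersions and \'etale morphisms preserve smoothness over the smooth base $Y$. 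Each base change is therefore a Voevodsky-type generating Nisnevich-local equivalence in $\Spc(\Sm_{/S})$, and their colimit, which is the original map, is a Nisnevich-local equivalence.
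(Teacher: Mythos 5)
Your proof is correct and follows essentially the same route as the paper's: the decisive geometric input in both is that a Nisnevich square in $\Sch_{/S}$ lying over a smooth scheme consists entirely of smooth schemes, fed into a universality-of-colimits argument after base change to smooth representables $\h_S(Y) \to \iota^*\h_S(X)$. The remaining differences are cosmetic --- you obtain the $\iota_!$ statements from the adjunction and the fact that $\iota^*$ preserves local objects, where the paper checks generators directly (the same continuity of $\iota$ in dual form), and you spell out by hand the cocontinuity argument that the paper delegates to SGA4, Exp.~III, Prop.~2.2.
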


  \begin{proof}
    Since $\iota$ preserves $\A^1$-projections, it follows that $\iota_!$ preserves $\A^1$-local equivalences.
    Since it is \emph{continuous} with respect to the Nisnevich topology (i.e., preserves Nisnevich coverings), it follows also that $\iota_!$ preserves Nisnevich-local equivalences.

    Note that $\iota$ is also \emph{cocontinuous} with respect to the Nisnevich topology, in the sense of \cite[Exp.~III]{SGA4}: that is, any Nisnevich square in $\Sch_{/S}$ over a smooth scheme $X \in \Sm_{/S}$ actually lies in $\Sm_{/S}$.
    This implies by \cite[Exp.~III, Prop.~2.2]{SGA4} that $\iota^*$ also preserves Nisnevich-local equivalences.
    The proof that $\iota^*$ preserves $\A^1$-local equivalences is similar, but let's spell it out.
    It suffices to show that, for any $X\in\Sch_{/S}$, the canonical morphism
      \begin{equation*}
        \iota^*\h_S(X \times \A^1) \to \iota^*\h_S(X)
      \end{equation*}
    is an $\A^1$-local equivalence of $\Sm$-fibred spaces.
    By universality of colimits it suffices to show that, for any $Y\in\Sm_{/S}$ and any morphism $\varphi : \h_S(Y) \to \iota^*\h_S(X)$ (corresponding to a morphism $Y \to X$ in $\Sch_{/S}$), the base change
      \begin{equation*}
        \iota^*\h_S(X \times \A^1) \fibprod_{\iota^*\h_S(X)} \h_S(Y) \to \h_S(Y)
      \end{equation*}
    is an $\A^1$-local equivalence.
    Since the morphism $\varphi$ factors as $\h_S(Y) \simeq \iota^*\h_S(Y) \to \iota^*\h_S(X)$, the morphism in question is identified with the morphism
      \begin{equation*}
        \iota^*\h_S(X \times \A^1) \fibprod_{\iota^*\h_S(X)} \iota^*\h_S(Y) \to \iota^*\h_S(Y),
      \end{equation*}
    which itself is identified with the canonical morphism $\h_S(Y \times \A^1) \to \h_S(Y)$, because $\iota^*$ and $\h_S$ commute with limits and because $\iota^*\iota_! = \id$.
    But this is an $\A^1$-local equivalence.
  \end{proof}

  \begin{prop}\label{prop:H into uH}
    The functor $\sF \mapsto \L_{\A^1,\Nis}\iota_!(\sF)$ defines a fully faithful embedding $\H(S) \hookrightarrow \uH(S)$.
    Its essential image is generated under small colimits by $(\A^1,\Nis)$-localizations of representables $\h_S(X)$, for $X \in \Sm_{/S}$.
  \end{prop}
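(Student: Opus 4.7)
The plan is to lift the presheaf-level adjunction $\iota_! \dashv \iota^*$ to an adjunction $\L_{\A^1,\Nis}\iota_! \dashv \iota^*$ between the localized \inftyCats $\H(S)$ and $\uH(S)$, and then to verify that the unit of this localized adjunction is pointwise invertible. Since $\iota$ preserves Nisnevich squares and $\A^1$-projections, the restriction functor $\iota^*$ sends $(\A^1,\Nis)$-local $\Sch$-fibred spaces to $(\A^1,\Nis)$-local $\Sm$-fibred spaces, so it indeed restricts to a functor $\uH(S) \to \H(S)$; and by \lemref{lem:iota}, $\iota_!$ preserves local equivalences, so $\L_{\A^1,\Nis}\iota_!$ descends from presheaves to a colimit-preserving functor $\H(S) \to \uH(S)$. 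A standard application of the universal property of Bousfield localization assembles these into an adjunction.

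The heart of the proof is to show the unit $\sF \to \iota^*\L_{\A^1,\Nis}\iota_!(\sF)$ is an equivalence for every $\sF \in \H(S)$. Since $\iota$ is fully faithful as a $1$-functor, left Kan extension along $\iota$ is fully faithful on presheaves; in particular $\iota^*\iota_!(\sF) \simeq \sF$ canonically. The unit therefore factors as
\begin{equation*}
  \sF \simeq \iota^*\iota_!(\sF) \to \iota^*\L_{\A^1,\Nis}\iota_!(\sF).
\end{equation*}
The displayed morphism is the image under $\iota^*$ of the $(\A^1,\Nis)$-localization map of $\iota_!(\sF)$, and is therefore an $(\A^1,\Nis)$-local equivalence of $\Sm$-fibred spaces by \lemref{lem:iota}. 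Its source is local by hypothesis, and its target is local because $\iota^*$ preserves local objects (continuity of $\iota$). A local equivalence between local objects is an equivalence, whence the claim of full faithfulness.

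For the essential image, observe that $\H(S)$ is generated under small colimits by the objects $\L_{\A^1,\Nis}\h_S(X)$ with $X \in \Sm_{/S}$, since representables generate the presheaf \inftyCat under colimits and $\L_{\A^1,\Nis}$ is a left adjoint. Being a left adjoint, $\L_{\A^1,\Nis}\iota_!$ preserves colimits, and since $\iota_!\h_S(X) = \h_S(X)$ for $X \in \Sm_{/S}$, its value on the generators is precisely $\L_{\A^1,\Nis}\h_S(X)$ regarded as an object of $\uH(S)$. The description of the essential image follows.

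The only non-formal point is the observation that $\iota^*$ preserves local objects, which hinges on the continuity of $\iota$ for the Nisnevich topology and for $\A^1$-projections; beyond this, the argument is pure bookkeeping with adjunctions and Bousfield localizations.
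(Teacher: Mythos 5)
Your proof is correct, and all of its ingredients are the ones the paper itself assembles in \lemref{lem:iota}; the difference is in the final mechanism for full faithfulness. The paper never touches the unit of $\L_{\A^1,\Nis}\iota_! \dashv \iota^*$ directly: it instead uses the fact that $\iota^*$ preserves local equivalences to restrict the (already fully faithful) right Kan extension $\iota_*$ to a functor $\H(S) \to \uH(S)$ right adjoint to $\iota^* : \uH(S) \to \H(S)$, and then invokes the adjoint-triple fact that in $\L\iota_! \dashv \iota^* \dashv \iota_*$ the outer two functors are simultaneously fully faithful. You instead verify directly that the unit $\sF \to \iota^*\L_{\A^1,\Nis}\iota_!(\sF)$ is invertible, by factoring it through the presheaf-level equivalence $\sF \simeq \iota^*\iota_!(\sF)$ and observing that $\iota^*$ carries the localization map $\iota_!(\sF) \to \L\iota_!(\sF)$ to a local equivalence between local objects --- which, like the paper's route, crucially needs the harder half of \lemref{lem:iota} (preservation of local equivalences by $\iota^*$, i.e.\ cocontinuity of $\iota$). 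Your version is arguably more self-contained, since it does not require knowing that $\iota_*$ restricts to the localizations, and it has the additional merit of actually proving the essential-image claim (via colimit-preservation of $\L\iota_!$ and $\iota_!\h_S(X) = \h_S(X)$), which the paper's proof asserts but does not argue. One small caveat: where you attribute the preservation of local \emph{objects} by $\iota^*$ to ``continuity'' of $\iota$, note that this is the adjoint reformulation of the statement that $\iota_!$ preserves local equivalences, which is the easy half of \lemref{lem:iota}; the preservation of local \emph{equivalences} by $\iota^*$, which you also use, is the separate cocontinuity point. Both are available in the lemma, so nothing is missing.
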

  \begin{proof}
    The fact that $\iota^*$ preserves $(\A^1,\Nis)$-equivalences implies that its right adjoint $\iota_*$ (right Kan extension) preserves $\A^1$-invariant Nisnevich-local spaces, and restricts to a functor $\iota_* : \H(S) \to \uH(S)$.
    Similarly, the fact that $\iota_!$ preserves $(\A^1,\Nis)$-equivalences implies that its right adjoint $\iota^*$ restricts to a functor $\iota^* : \uH(S) \to \H(S)$, right adjoint to $\L\iota_!$.
    Now $\iota^* : \uH(S) \to \H(S)$ is a functor whose right adjoint is fully faithful, so by abstract nonsense its left adjoint $\L\iota_!$ is also fully faithful.
  \end{proof}

  \ssec{}
    We now consider the stable versions of our categories.
    Let $\T$ denote the Thom space $\A^1_S/(\A^1_S \setminus S)$ of the trivial line bundle over $S$.
    We let $\uSH(S)$, denote the \inftyCat of $\T$-spectra in the symmetric monoidal \inftyCat $\uH(S)_\bullet$; we refer to these simply as \emph{$\Sch$-fibred motivic spectra}.
    By construction, $\uSH(S)$ is generated under small colimits by objects of the form $\Omega^n_\T\Sigma^\infty_\T (X_+)$, for $X \in \Sch_{/S}$ and $n\ge0$.

    Note that $\uSH(S)$ can alternatively be realized as the left Bousfield localization of the \inftyCat of $\T$-spectra in pointed $\Sch$-fibred spaces.
    For example, a $\T$-spectrum $\sF$ of pointed $\Sch$-fibred spaces is $\A^1$-invariant or Nisnevich-local iff for every integer $n\ge 0$, the pointed $\Sch$-fibred space $\Omega^\infty_\T\Sigma_\T^n(\sF)$ has the respective property.

    Similarly, we have the \inftyCat $\SH(S)$ of \emph{$\Sm$-fibred motivic spectra}, i.e., $\T$-spectra in the symmetric monoidal \inftyCat $\H(S)_\bullet$.
    It is generated under small colimits by objects of the form $\Omega^n_\T\Sigma^\infty_\T (X_+)$, for $X \in \Sm_{/S}$ and $n\ge0$.

  \ssec{}
    Just like in the unstable case (\propref{prop:H into uH}), $\SH(S)$ can be described as a full subcategory of $\uSH(S)$.
    Since $\iota^* : \uH(S) \to \H(S)$ commutes with $\Omega_\T$, it admits a unique extension $\iota^* : \uSH(S) \to \SH(S)$ that commutes with $\Omega^\infty_\T$.
    Its fully faithful right adjoint $\iota_* : \H(S) \to \uH(S)$ similarly extends uniquely to a fully faithful functor $\iota_* : \SH(S) \to \uSH(S)$ that commutes with $\Omega^\infty_\T$.
    The left adjoint $\L\iota_!$ also extends uniquely to a functor
      \begin{equation}\label{eq:SH into uSH}
        \L\iota_! : \SH(S) \to \uSH(S)
      \end{equation}
    that commutes with $\Sigma^\infty_\T$.

    \begin{prop}\label{prop:SH into uSH}
      The functor \eqref{eq:SH into uSH} is fully faithful, with essential image generated under small colimits by objects of the form $\Omega^n_\T\Sigma^\infty_\T (X_+)$, for $X \in \Sm_{/S}$ and $n\ge0$.
    \end{prop}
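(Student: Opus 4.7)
The plan is to mirror the proof of \propref{prop:H into uH}: first establish full faithfulness of $\iota_* : \SH(S) \to \uSH(S)$, then transfer it to $\L\iota_!$ via the three-term adjunction between $\L\iota_!$, $\iota^*$, and $\iota_*$. For the first step, I would realize both $\SH(S)$ and $\uSH(S)$ as the inverse limit in \inftyCats of the tower $\cdots \to \H(S)_\bullet \to \H(S)_\bullet$ (resp.\ the tower for $\uH(S)_\bullet$) with transition functor $\Omega_\T$. The unstable $\iota^*$ and $\iota_*$, being right adjoints, commute with $\Omega_\T$ and so induce functors between these limits; by the uniqueness clauses characterizing the stable $\iota^*$ and $\iota_*$ as extensions commuting with $\Omega^\infty_\T$, these induced functors must agree with the stable versions. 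In particular, the stable $\iota^*\iota_*$ acts levelwise, so full faithfulness of the unstable $\iota_*$ given by \propref{prop:H into uH} implies that the stable counit $\iota^*\iota_* \to \id$ is a levelwise equivalence, hence an equivalence.

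From the three-term adjunction, full faithfulness of $\iota_*$ is formally equivalent to full faithfulness of $\L\iota_!$ (the same Yoneda-based argument as at the end of \propref{prop:H into uH}: the unit $X \to \iota^*\L\iota_!(X)$ is an equivalence for all $X \in \SH(S)$ if and only if the counit $\iota^*\iota_*(Y) \to Y$ is one for all $Y \in \SH(S)$, by transposing $\Maps$ across the two adjunctions).

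For the essential image, the functor $\L\iota_!$ preserves small colimits and commutes with $\Sigma^\infty_\T$ by construction. Since its right adjoint $\iota^*$ commutes with $\Omega_\T$, $\L\iota_!$ commutes with $\Sigma_\T$, and hence also with $\Omega_\T$ because $\Sigma_\T$ is an equivalence on the stable categories. Combined with the unstable identity $\L\iota_!(X_+) \simeq X_+$ for $X \in \Sm_{/S}$, this yields $\L\iota_!(\Omega^n_\T\Sigma^\infty_\T(X_+)) \simeq \Omega^n_\T\Sigma^\infty_\T(X_+)$ in $\uSH(S)$; since these objects generate $\SH(S)$ under colimits, the essential image has the claimed description. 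The only subtlety I anticipate is the levelwise action of $\iota^*$ and $\iota_*$ on spectra --- once that is in place, everything else is a formal consequence of the unstable proposition and standard adjoint yoga.
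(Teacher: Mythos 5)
Your proposal is correct and follows essentially the same route as the paper: the paper's one-line proof is precisely the adjoint-triple trick (full faithfulness of $\L\iota_!$ is equivalent to full faithfulness of $\iota_*$), with the full faithfulness of the stable $\iota_*$ asserted in the paragraph preceding the proposition. Your tower/levelwise argument simply makes explicit what the paper leaves implicit there, and your treatment of the essential image matches the intended one.
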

    \begin{proof}
      Same trick as before: fully faithfulness of $\L\iota_!$ is equivalent to fully faithfulness of $\iota_*$.
    \end{proof}

  \ssec{}
    Before arriving at the main point of this section, we also need to discuss the functoriality of $\Sch$-fibred motivic spectra.
    For a morphism $f : T \to S$, there is a base change functor $\Sch_{/S} \to \Sch_{/T}$ which restricts to $\Sm_{/S} \to \Sm_{/T}$.
    If $f$ is of finite presentation, there is a left adjoint $\Sch_{/T} \to \Sch_{/S}$, the forgetful functor $(X \to T) \mapsto (X \to T \to S)$, which does \emph{not} restrict to $\Sm_{/S} \to \Sm_{/T}$ unless $f$ is smooth.
    All these functors preserve both $\A^1$-local and Nisnevich-local equivalences, and commute with the inclusions $\iota : \Sm_{/?} \to \Sch_{/?}$, so we get the following operations on the unstable categories $\uH(-)$:
      \begin{enumerate}
        \item
          For every morphism $f : T \to S$, there are direct image functors $f_* : \uH(T) \to \uH(S)$ and $f_* : \H(T) \to \H(S)$, given by restriction of presheaves along the base change functors $\Sch_{/S} \to \Sch_{/T}$ and $\Sm_{/S} \to \Sm_{/T}$, respectively.
          Moreover, we have $\iota^* f_* = f_* \iota^*$.
        
        \item
          For every morphism $f : T \to S$, there are inverse image functors $f^* : \uH(S) \to \uH(T)$ and $f^* : \H(S) \to \H(T)$, left adjoint to $f_*$.
          They are given by $(\A^1,\Nis)$-localizing the left Kan extensions along the respective base change functors.
          Moreover, we have $\L\iota_! \circ f^* = f^* \circ\L\iota_!$.

        \item
          If $f$ is of finite presentation (resp. smooth), then $f^*$ admits a left adjoint $f_\sharp : \uH(T) \to \uH(S)$ (resp. $f_\sharp : \H(T) \to \H(S)$), given by $(\A^1,\Nis)$-localizing the left Kan extension along the forgetful functor.
          In fact, in this case $f^*$ coincides with restriction along the forgetful functor $\Sch_{/T} \to \Sch_{/S}$ (resp. $\Sm_{/T} \to \Sm_{/S}$).
          Moreover, we have $\iota^* f^* = f^* \iota^*$ and $\L\iota_! \circ f_\sharp = f_\sharp \circ\L\iota_!$ in this case.
      \end{enumerate}

      Since the functors $f^*$ commute with $\bT$-suspension, there are unique colimit-preserving functors $f^* : \uSH(S) \to \uSH(T)$ and $f^* : \SH(S) \to \SH(T)$ such that $f^* \circ \Sigma^\infty_\bT \simeq \Sigma^\infty_\bT \circ f^*$.
      Their right adjoints $f_* : \uSH(T) \to \uSH(S)$ and $f_* : \SH(T) \to \SH(S)$ are the unique limit-preserving functors that commute with $\Omega^\infty_\bT$.

      If $f$ is of finite presentation (resp. smooth), then the functors $f_\sharp$ commute with $\bT$-suspension and induce unique colimit-preserving functors $f_\sharp : \uSH(T) \to \uSH(S)$ and $f_\sharp : \SH(T) \to \SH(S)$ that commute with $\Sigma^\infty_\bT$.
      By adjunction, the functors $f^*$ commute also with $\Omega^\infty_\bT$ in this case.

  \ssec{}
    Finally, we have:

    \begin{prop}\label{prop:cohomology}
      Let $\sF \in \SH(S)$ be a $\Sm$-fibred motivic spectrum over $S$.
      Then there is a canonical isomorphism
        \begin{equation*}
          \Gamma(X, \L\iota_!(\sF))
            \simeq \Maps_{\SH(S)}(\un_S, f_*f^*(\sF))
        \end{equation*}
      for every morphism $f : X \to S$ of finite presentation.
    \end{prop}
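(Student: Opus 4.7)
The plan is to reduce the identity to a chain of formal manipulations: unfold $\Gamma(X,-)$ as a mapping space via Yoneda, use that $\Sigma^\infty_\T(X_+)$ is of the form $f_\sharp f^*(\un_S)$ in $\uSH(S)$, then transport across $\L\iota_!$ using its full faithfulness and compatibility with $f^*$, and conclude by the $(f^*,f_*)$ adjunction in $\SH$.

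First I would rewrite the left-hand side. By definition of sections of a $\Sch$-fibred motivic spectrum we have
  \begin{equation*}
    \Gamma(X,\L\iota_!(\sF)) \simeq \Maps_{\uSH(S)}(\Sigma^\infty_\T(X_+), \L\iota_!(\sF)).
  \end{equation*}
Since $f : X \to S$ is of finite presentation, the functor $f_\sharp : \uSH(X) \to \uSH(S)$ is available, and chasing the Yoneda embedding through $f_\sharp$ (which on the unstable side is restriction along the forgetful functor) gives $\Sigma^\infty_\T(X_+) \simeq f_\sharp f^*(\un_S) \simeq f_\sharp(\un_X)$, using that $f^*$ is symmetric monoidal. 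Applying the $(f_\sharp,f^*)$ adjunction in $\uSH$ transforms the mapping space into $\Maps_{\uSH(X)}(\un_X, f^*\L\iota_!(\sF))$.

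Next I would invoke the compatibility $f^* \circ \L\iota_! \simeq \L\iota_! \circ f^*$. This is stated unstably as item (b) of the functoriality list; since all three functors commute with $\Sigma^\infty_\T$ and are determined by their restriction to suspension spectra, the identity extends to $\SH$ and $\uSH$. Hence $f^*\L\iota_!(\sF) \simeq \L\iota_!(f^*\sF)$. Because $\iota$ sends the terminal object $X \to X$ in $\Sm_{/X}$ to the same in $\Sch_{/X}$, we also have $\L\iota_!(\un_X) \simeq \un_X$. Full faithfulness of $\L\iota_! : \SH(X)\hookrightarrow\uSH(X)$ from \propref{prop:SH into uSH} then identifies
  \begin{equation*}
    \Maps_{\uSH(X)}(\L\iota_!(\un_X), \L\iota_!(f^*\sF)) \simeq \Maps_{\SH(X)}(\un_X, f^*(\sF)).
  \end{equation*}
Finally, the $(f^*,f_*)$ adjunction in $\SH$ rewrites this as $\Maps_{\SH(S)}(\un_S, f_*f^*(\sF))$, completing the chain.

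There is no real obstacle here; the argument is a formal juggling of adjunctions. The two points that deserve care are precisely the verifications flagged above: that the compatibility $f^*\L\iota_! \simeq \L\iota_! f^*$ carries over from the unstable level to the stable one (immediate from the universal property of $\Sigma^\infty_\T$ and the fact that both sides are colimit-preserving), and that $\L\iota_!$ matches units (immediate from $\iota_!\h_S(X) = \h_S(X)$ for $X \in \Sm_{/S}$). Once these are noted, the six identifications compose to the stated isomorphism.
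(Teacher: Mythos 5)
Your proof is correct and follows essentially the same chain of identifications as the paper's, just traversed in the opposite direction (starting from $\Gamma(X,\L\iota_!(\sF))$ and ending at $\Maps_{\SH(S)}(\un_S,f_*f^*(\sF))$ rather than the reverse), with the same two key inputs: the compatibility $f^*\L\iota_!\simeq\L\iota_! f^*$ and the full faithfulness of $\L\iota_!$ from \propref{prop:SH into uSH}. One cosmetic slip: it is $f^*$, not $f_\sharp$, that is given by restriction along the forgetful functor ($f_\sharp$ is the localized left Kan extension along it), but this does not affect the identification $\Sigma^\infty_\T(X_+)\simeq f_\sharp(\un_X)$.
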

    \begin{proof}
      By \propref{prop:SH into uSH} and the various compatibilities listed above, we have canonical isomorphisms
        \begin{align*}
          \Maps_{\SH(S)}(\un_S, f_*f^*(\sF))
            &\simeq \Maps_{\SH(X)}(\un_X, f^*(\sF))\\
            &\simeq \Maps_{\uSH(X)}(\un_X, \L\iota_! f^*(\sF))\\
            &\simeq \Maps_{\uSH(X)}(\un_X, f^*\L\iota_!(\sF))\\
            &\simeq \Maps_{\uSH(X)}(f_\sharp(\un_X), \L\iota_!(\sF))\\
            &\simeq \Maps_{\uSH(S)}(\Sigma^\infty_\T(X_+), \L\iota_! (\sF))\\
            &\simeq \Gamma(X, \L\iota_! (\sF))
        \end{align*}
      as claimed.
    \end{proof}

\section{Cisinski's theorem}

  \ssec{}
    Recall the following theorem \cite[Prop.~3.7]{Cisinski}:

    \begin{thm}[Cisinski]\label{thm:Cisinski}
      Let $\sF \in \SH(S)$ be a $\Sm$-fibred motivic spectrum over $S$.
      Then for every abstract blow-up square of schemes
        \begin{equation*}
          \begin{tikzcd}
            Z' \ar{r}{k}\ar{d}{q}\ar{rd}{r}
              & S' \ar{d}{p}
            \\
            Z \ar{r}{i}
              & S,
          \end{tikzcd}
        \end{equation*}
      the induced square in $\SH(S)$
        \begin{equation*}
          \begin{tikzcd}
            \sF \ar{r}\ar{d}
              & i_*i^*(\sF)\ar{d}
            \\
            p_*p^*(\sF) \ar{r}
              & r_*r^*(\sF)
          \end{tikzcd}
        \end{equation*}
      is cartesian.
    \end{thm}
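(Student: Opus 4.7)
The plan is to reduce cartesianness of the square to two standard inputs of the six-functor formalism for $\SH$: the localization (recollement) triangle for closed immersions and the equivalence $p_* \simeq p_!$ for proper maps. Write $j : U \hookrightarrow S$ for the open complement of $i$, and $j' : U' \hookrightarrow S'$ for the open complement of $k$. The defining property of an abstract blow-up square is that $p$ restricts to an isomorphism $U' \xrightarrow{\sim} U$, so I identify $U' = U$ and view $j'$ as an open immersion $U \hookrightarrow S'$ with $p \circ j' = j$.

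The square is cartesian iff the induced map between the fibers of its two horizontal arrows is an equivalence. First I would identify the top fiber using the localization triangle $j_! j^*\sF \to \sF \to i_* i^*\sF$ in $\SH(S)$: the top fiber is $j_! j^*\sF$. Next, I apply $p_*$ to the analogous localization triangle $j'_! (j')^* p^*\sF \to p^*\sF \to k_* k^* p^*\sF$ in $\SH(S')$ and use $p_* k_* = r_*$ together with $(j')^* p^* \simeq j^*$ (from $p \circ j' = j$) to identify the bottom fiber as $p_* j'_! j^*\sF$.

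The last step matches the two fibers. Since $p$ is proper, $p_* \simeq p_!$; by functoriality of lower-shriek, $p_* j'_! \simeq p_! j'_! \simeq (p \circ j')_! = j_!$, so the bottom fiber is also $j_! j^*\sF$. Tracking naturality---or, equivalently, unwinding the smooth base change $p^* j_! \simeq j'_! (p')^*$ along the iso $p' : U' \to U$---shows that the map of fibers induced by $\sF \to p_* p^*\sF$ is the identity under this identification, and the square is cartesian.

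The main obstacle will be justifying the six-functor inputs: the localization theorem for closed immersions, the equivalence $p_* \simeq p_!$ for proper morphisms, and composition of lower-shriek. None of this is developed in the excerpt; all are classical results for $\SH$ due to Ayoub and Cisinski--Déglise. A more self-contained alternative would test cartesianness against the generators $\Sigma^\infty_\T(Y_+)$ for $Y \in \Sm_{/S}$ and, using \propref{prop:cohomology} together with smooth base change, reduce to showing that $\L\iota_!\sF \in \uSH(S)$ satisfies abstract blow-up descent when evaluated on smooth schemes; but extracting this descent property still requires essentially the same ingredients.
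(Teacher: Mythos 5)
The paper does not actually prove this statement: it quotes it from \cite[Prop.~3.7]{Cisinski}, remarking only that Cisinski's argument uses nothing beyond the localization and proper base change theorems, which were extended to qcqs bases in \cite[App.~C]{Hoyois}. Your argument is essentially that proof --- compare the fibers of the two rows using the localization triangle $j_!j^*\sF \to \sF \to i_*i^*\sF$, then identify the bottom fiber with $j_!j^*\sF$ via $p_*\simeq p_!$ and the base change equivalence $p^*j_!\simeq j'_!(p')^*$ with $p'$ an isomorphism --- so it is correct in outline and matches the cited source; the one step you should not leave to ``tracking naturality'' in a complete writeup is the verification that the induced map on fibers really is the unit $j_!j^*\sF \to p_*p^*(j_!j^*\sF)$ (and that the displayed composite equivalence inverts it), since that compatibility of the exchange transformations is where the actual diagram chase lives.
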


  The theorem is stated in \cite{Cisinski} for noetherian schemes of finite dimension, but the proof only uses the formalism of the six operations (namely, the proper base change and localization theorems), which have been extended to arbitrary bases by Hoyois \cite[App.~C]{Hoyois}.

  \ssec{}

    Let $\sF \in \SH(S)$ be a $\Sm$-fibred motivic spectrum.
    Let $X \in \Sch_{/S}$ with structural morphism $f : X \to S$, and suppose we have an abstract blow-up square
      \begin{equation*}
        \begin{tikzcd}
          Z' \ar{r}{k}\ar{d}{q}\ar{rd}{r}
            & X' \ar{d}{p}
          \\
          Z \ar{r}{i}
            & X.
        \end{tikzcd}
      \end{equation*}
    It follows from \thmref{thm:Cisinski} that we have an induced cartesian square
      \begin{equation*}
        \begin{tikzcd}
            f_*f^*(\sF) \ar{r}\ar{d}
              & f_*i_*i^*f^*(\sF)\ar{d}
            \\
            f_*p_*p^*f_*(\sF) \ar{r}
              & f_*r_*r^*f_*(\sF)
        \end{tikzcd}
      \end{equation*}
    in $\SH(S)$.
    Applying the left-exact functor $\Maps_{\SH(S)}(\un_S, -)$ and using the identifications of \propref{prop:cohomology}, we get a cartesian square
      \begin{equation*}
        \begin{tikzcd}
          \Gamma(X, \L\iota_!(\sF)) \ar{r}\ar{d}
            & \Gamma(Z, \L\iota_!(\sF)) \ar{d}
          \\
          \Gamma(X', \L\iota_!(\sF)) \ar{r}
            & \Gamma(Z', \L\iota_!(\sF)).
        \end{tikzcd}
      \end{equation*}
    Since $\sF$ was arbitrary, we have just shown:

    \begin{cor}\label{cor:Li_!(F) cdh-local}
      For every $\Sm$-fibred motivic spectrum $\sF \in \SH(S)$, the $\Sch$-fibred spaces
        \begin{equation*}
          \Omega^\infty_\T \L\iota_! \Sigma^n_\T(\sF) \simeq \Omega^\infty_\T \Sigma^n_\T \L\iota_!(\sF)
        \end{equation*}
      send abstract blow-up squares to cartesian squares, for all $n\ge0$.
    \end{cor}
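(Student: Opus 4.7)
The plan is that this corollary is a direct packaging of the computation carried out in the paragraphs immediately preceding it. The text has already produced, for an arbitrary $\sG \in \SH(S)$ and an arbitrary abstract blow-up square in $\Sch_{/S}$, a cartesian square of spaces
  \begin{equation*}
    \begin{tikzcd}[row sep=small]
      \Gamma(X, \L\iota_!(\sG)) \ar{r}\ar{d}
        & \Gamma(Z, \L\iota_!(\sG)) \ar{d}
      \\
      \Gamma(X', \L\iota_!(\sG)) \ar{r}
        & \Gamma(Z', \L\iota_!(\sG)),
    \end{tikzcd}
  \end{equation*}
built by pulling back along $f : X \to S$, applying \thmref{thm:Cisinski} over $X$, pushing forward by $f_*$, mapping out of $\un_S$, and finally invoking \propref{prop:cohomology}. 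So the corollary will just be the case $\sG = \Sigma^n_\T(\sF)$, which still lies in $\SH(S)$.

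Concretely, I would first note that for any $\sch$-fibred motivic spectrum $\sH \in \uSH(S)$, the value of its underlying $\Sch$-fibred space $\Omega^\infty_\T(\sH)$ at $X \in \Sch_{/S}$ is by definition $\Gamma(X, \sH)$. Applying this with $\sH = \L\iota_!\Sigma^n_\T(\sF)$ and using the cartesian square above for $\sG = \Sigma^n_\T(\sF)$, we immediately get that $\Omega^\infty_\T \L\iota_!\Sigma^n_\T(\sF)$ sends every abstract blow-up square to a cartesian square of spaces.

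It remains to justify the equivalence $\Omega^\infty_\T \L\iota_!\Sigma^n_\T(\sF) \simeq \Omega^\infty_\T \Sigma^n_\T \L\iota_!(\sF)$. Here I would observe that the stable extension of $\L\iota_!$ was constructed in \propref{prop:SH into uSH} so as to commute with $\Sigma^\infty_\T$; equivalently (by adjunction), its right adjoint $\iota^*$ commutes with $\Omega_\T$, which forces $\L\iota_!$ to commute with $\Sigma_\T$ in the stable setting, and hence with every $\Sigma^n_\T$ for $n \ge 0$. Combined with the previous paragraph this yields the stated equivalence.

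There is no real obstacle: the deep input (abstract blow-up descent over $X$) has been absorbed into \thmref{thm:Cisinski}, and the translation between the six-functor formalism on $\Sm$-fibred motivic spectra and sectionwise evaluation of $\Sch$-fibred motivic spectra is exactly the content of \propref{prop:cohomology}. The only thing to double-check is the routine bookkeeping that pullback of the blow-up square along $f$ is again an abstract blow-up square (stable under base change) and that the identifications $f_* i_* i^* f^* \simeq (f i)_*(f i)^*$, etc., match the labels $Z, Z', X'$ in the conclusion.
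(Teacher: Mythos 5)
Your proposal is correct and follows the paper's own route exactly: the paper gives no separate proof, but derives the corollary from the immediately preceding computation (pull back along $f$, apply \thmref{thm:Cisinski} over $X$, push forward by the left-exact $f_*$, map out of $\un_S$, and identify the terms via \propref{prop:cohomology}), applied with $\Sigma^n_\T(\sF)$ in place of $\sF$. Your added justification that $\L\iota_!$ commutes with $\Sigma_\T$ (via its right adjoint commuting with $\Omega_\T$) is a correct and harmless elaboration of an identification the paper simply asserts.
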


\section{Cdh-local motivic spectra}

  \ssec{}
    A $\Sch$-fibred motivic spectrum is \emph{cdh-local} if it is Nisnevich-local and moreover satisfies \v{C}ech descent with respect to the topology generated by abstract blow-up squares in $\Sch_{/S}$.
    Like for the Nisnevich topology, Voevodsky's theorem \cite[Thm.~3.2.5]{AsokHoyoisWendt} implies that it is equivalent to require that every abstract blow-up square $Q$ is sent to a cartesian square of spaces $\Gamma(Q, \sF)$.

    We write $\uH_\cdh(S)$ for the \inftyCat of $\A^1$-local cdh-local $\Sch$-fibred spaces, by definition a full subcategory of $\uH(S)$.
    We write $\uSH_\cdh(S)$ for the \inftyCat of $\A^1$-local cdh-local $\Sch$-fibred spectra, by definition a full subcategory of $\uSH(S)$.
    That is, its objects are $\T$-spectra of pointed $\Sch$-fibred spaces that are $\A^1$-invariant and cdh-local.

  \ssec{}
    Consider the restriction of the cdh topology to $\Sm_{/S}$.
    Since cdh coverings of a smooth scheme are not necessarily smooth themselves, the inclusion $\iota : \Sm_{/S} \hookrightarrow \Sch_{/S}$ will fail to be cocontinuous for the cdh topology.
    In particular, the analogue of \propref{prop:SH into uSH} for cdh-local spaces does not hold.
    Instead, we can define $\SH_\cdh(S)$ differently so that \propref{prop:SH into uSH} holds tautologically.
    Namely, simply take $\SH_\cdh(S)$ to be the full subcategory of $\uSH_\cdh(S)$ generated under small colimits by objects of the form $\Omega^n_\T \Sigma^\infty_\T(X_+)$, for $X \in \Sm_{/S}$ and $n\ge0$.

  \ssec{}
    Consider the canonical functor
      \begin{equation}\label{eq:SH into uSH_cdh}
        \L_{\A^1,\cdh}\iota_! : \SH(S) \to \uSH_\cdh(S).
      \end{equation}
    We now state the main result, a reformulation of \thmref{thm:Cisinski}:

    \begin{thm}[Cisinski]\label{thm:main}
      For every qcqs scheme $S$, the canonical functor \eqref{eq:SH into uSH_cdh} is fully faithful, and induces an equivalence
        \begin{equation*}
          \SH(S) \to \SH_\cdh(S).
        \end{equation*}
    \end{thm}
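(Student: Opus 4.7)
The plan is to derive the theorem as an essentially formal consequence of \corref{cor:Li_!(F) cdh-local} combined with the stable comparison of \propref{prop:SH into uSH}. The key observation is that $\L\iota_!$ already lands inside the full subcategory $\uSH_\cdh(S) \subset \uSH(S)$, so applying the further cdh-localization has no effect.

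First I would check that for every $\sF \in \SH(S)$, the $\Sch$-fibred motivic spectrum $\L\iota_!(\sF)$ is cdh-local. Cdh-locality of a $\T$-spectrum is equivalent to cdh-locality of each pointed $\Sch$-fibred space $\Omega^\infty_\T \Sigma^n_\T \L\iota_!(\sF)$ for $n \ge 0$; by Voevodsky's criterion this amounts to sending every abstract blow-up square to a cartesian square, which is precisely \corref{cor:Li_!(F) cdh-local}. Consequently $\L_{\A^1,\cdh}\iota_!(\sF) \simeq \L\iota_!(\sF)$ in $\uSH_\cdh(S)$, and the functor \eqref{eq:SH into uSH_cdh} coincides with $\L\iota_!$ viewed as taking values in the smaller target $\uSH_\cdh(S)$. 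Full faithfulness of \eqref{eq:SH into uSH_cdh} then follows immediately from full faithfulness of $\L\iota_!$ (\propref{prop:SH into uSH}) together with full faithfulness of the inclusion $\uSH_\cdh(S) \hookrightarrow \uSH(S)$.

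It remains to identify the essential image with $\SH_\cdh(S)$. By \propref{prop:SH into uSH}, the essential image inside $\uSH(S)$ is the colimit closure, computed in $\uSH(S)$, of the objects $\Omega^n_\T \Sigma^\infty_\T (X_+)$ for $X \in \Sm_{/S}$ and $n \ge 0$. Both these generators and all the colimits used to form the closure lie in $\uSH_\cdh(S)$; since the cdh-localization acts as the identity on cdh-local objects, such colimits computed in $\uSH(S)$ coincide with the corresponding colimits computed in $\uSH_\cdh(S)$. Hence the essential image is also the colimit closure in $\uSH_\cdh(S)$ of the same generators, which is $\SH_\cdh(S)$ by definition. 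The only real bookkeeping point is this comparison of closures in $\uSH(S)$ versus in $\uSH_\cdh(S)$; once \corref{cor:Li_!(F) cdh-local} is in hand it is resolved immediately, so I expect no serious obstacle beyond this.
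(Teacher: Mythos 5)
Your proposal is correct and follows essentially the same route as the paper: use \corref{cor:Li_!(F) cdh-local} to see that $\L_{\A^1,\Nis}\iota_!(\sF)$ is already cdh-local, deduce $\L_{\A^1,\cdh}\iota_! \simeq \L_{\A^1,\Nis}\iota_!$, and conclude full faithfulness from \propref{prop:SH into uSH}. Your extra care in checking that the colimit closure of the generators is the same whether computed in $\uSH(S)$ or in $\uSH_\cdh(S)$ is a point the paper dismisses with ``by construction,'' and your justification of it is sound.
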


    \begin{proof}
      \corref{cor:Li_!(F) cdh-local} says that, for every $\sF \in \SH(S)$, the $\Sch$-fibred motivic spectrum $\L_{\A^1,\Nis}\iota_!(\sF)$ is cdh-local.
      In other words, we have $\L_{\A^1,\Nis}\iota_!(\sF) \simeq \L_{\A^1,\cdh}\iota_!(\sF)$ for all $\sF$.
      Thus the fully faithfulness of \eqref{eq:SH into uSH_cdh} follows from \propref{prop:SH into uSH}.
      Its essential image is exactly $\SH_\cdh(S)$ by construction.
    \end{proof}


\bibliographystyle{halphanum}

\noindent
Institute of Mathematics, Academia Sinica, Taipei, 10617, Taiwan

\end{document}